\newcolumntype{?}{!{\vrule width 1pt}}
\newtheorem{theorem}{Theorem}
\newtheorem{proposition}[theorem]{Proposition}
\newtheorem{definition}[theorem]{Definition}
\newtheorem{remark}[theorem]{Remark}
\theoremstyle{definition}
\newcommand{\PP}{\mathbb{P}}
\newcommand{\RR}{\mathbb{R}}
\newcommand{\QQ}{\mathbb{Q}}
\newcommand{\CC}{\mathbb{C}}
\title{\textbf{3264 Conics in a Second}}
\author{\bf Paul Breiding, Bernd Sturmfels and Sascha Timme}
\date{}
\begin{document}
\maketitle

This article and its accompanying web interface present \emph{Steiner's conic problem} and a discussion on how \emph{enumerative} and \emph{numerical} algebraic geometry complement each other.
The intended audience is students at an advanced undergrad level.
Our readers can see
current computational tools in action on a
geometry problem that has inspired scholars for two centuries.
The take-home message is that numerical methods in algebraic geometry
are fast and reliable.

We begin by recalling the statement of Steiner's conic problem.
A {\em conic} in the plane $\RR^2$ is the set of solutions to a quadratic equation
$A(x,y) = 0$, where
\begin{equation}
\label{eq:conicA}
A(x,y)  =  a_1 x^2 + a_2 x y + a_3 y^2 + a_4 x + a_5 y + a_6 .
\end{equation}
If there is a second conic
\begin{equation}
\label{eq:conicU}
 U(x,y) =  u_1 x^2 +u_2 x y +u_3 y^2 + u_4 x + u_5 y + u_6,
\end{equation}
then the two conics intersect in four points in $\CC^2$, counting multiplicities and counting intersections at points at infinity,
provided $A$ and $U$ are irreducible and not multiples of each other.
This is the content of {\em B\'ezout's Theorem}. To take into account the points of intersection at infinity, algebraic geometers like to replace the
affine plane $\CC^2$ with the complex projective plane $\PP^2_\CC$. In the following, when we write 'count', we always mean counting solutions in projective space. Nevertheless, for our exposition we work with $\CC^2$.

A solution $(x,y)$ of the system $A=U=0$ has multiplicity $\geq 2$
if it is a zero of the {\em Jacobian determinant}
\begin{equation}
\label{eq:jacobian}
\begin{array}{rl}
   &\frac{\partial A}{ \partial x} \cdot \frac{\partial U}{\partial y} -
  \frac{\partial A}{ \partial y} \cdot \frac{\partial U}{\partial  x}\,=\,
     2 (a_1 u_2 - a_2 u_1) x^2  \qquad \\
    & \,+4 (a_1 u_3 - a_3 u_1) x y
   + \cdots + (a_4 u_5 - a_5 u_4).
 \end{array}
\end{equation}
Geometrically,
the conic $U$ is {\em tangent} to the conic $A$ if
(\ref{eq:conicA}), (\ref{eq:conicU}) and (\ref{eq:jacobian}) are zero
 for some $(x,y)\in \CC^2$.
For instance, Figure \ref{great_figure} shows a red ellipse and five other blue conics, which are tangent to the red ellipse. {\em Steiner's conic problem} asks  the following question:

\begin{figure*}[t]
  \begin{center}
\includegraphics[width = 0.5\textwidth]{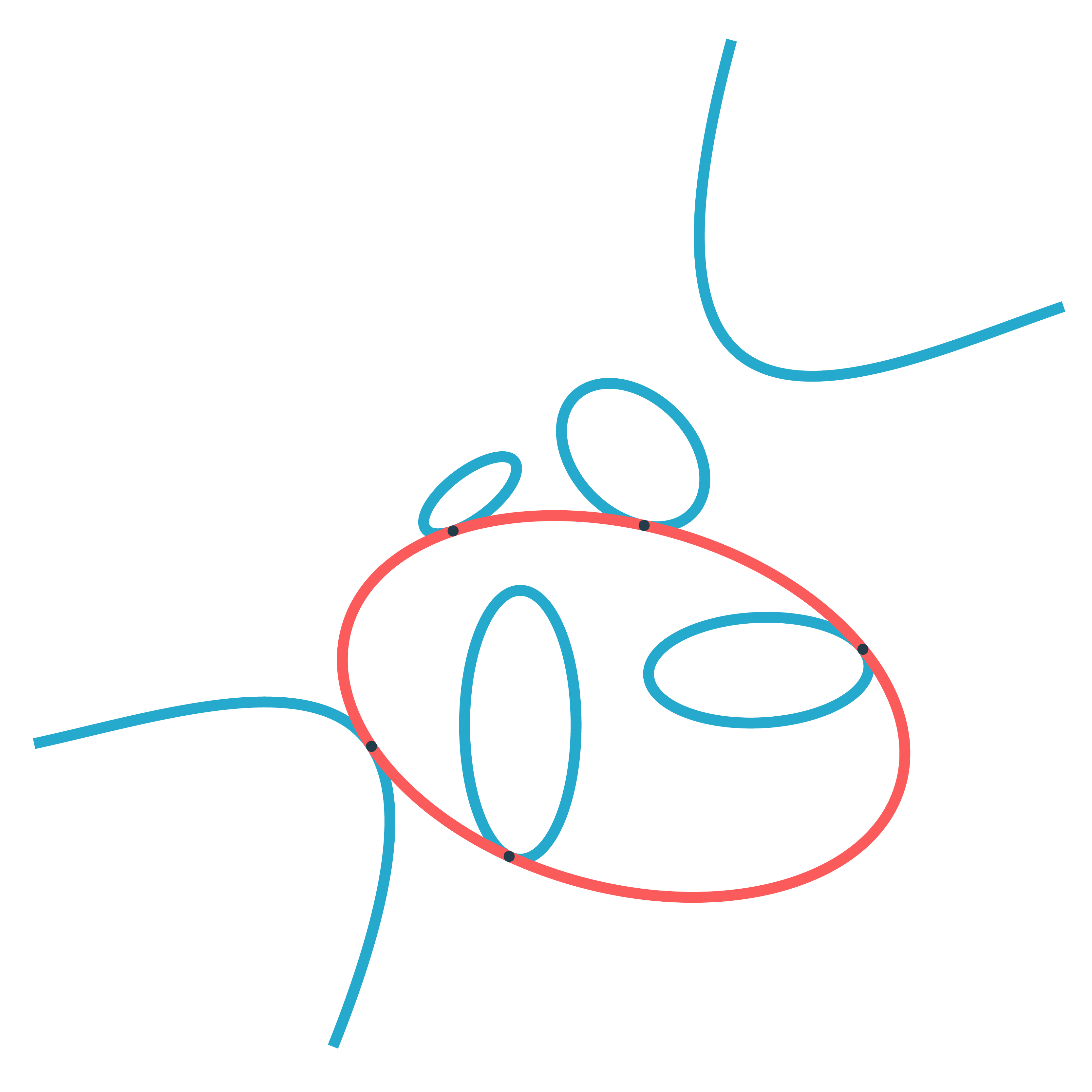}
\vspace{-0.31in}
\end{center}
  \caption{The red ellipse is tangent to four blue ellipses and one blue hyperbola.\label{great_figure}}
  \end{figure*}

  \begin{quote}
  \emph{How many conics in the plane are tangent to five given conics in general position?}
  \end{quote}

\noindent The number is five because
each tangency condition removes one
of the five degrees of freedom in a conic.

The present article concerns the following two subject areas and how they approach Steiner's problem:\\

\begin{tabular}{l}
 {\bf Enumerative algebraic geometry:} \\
\hspace{0.4em}{\em How many conics are tangent to five conics?} \\
{\bf Numerical algebraic geometry:} \\
\hspace{0.4em}{\em How do we find all conics tangent to five conics?}\\
\end{tabular}
\vspace{0.4em}

\noindent The first question is the original conic problem, first asked in 1848 by Steiner who suggested the answer $7776$.
That  number turned out to be incorrect. In the year 1864 Chasles gave the correct answer of $3264$.
This was further developed by Schubert, whose~1879 book
led to Hilbert's 15th problem, and thus to
 the 20th century development of enumerative algebraic geometry.
The number $3264$ appears prominently in the title of
the textbook by Eisenbud and Harris \cite{EH}.
A delightful introduction to Steiner's problem was presented by
Bashelor, Ksir and Traves~in~\cite{BKT}.

Numerical algebraic geometry is a younger subject. It started about 40 years ago, going back at least to \cite{GZ}.
The textbook by Sommese and Wampler~\cite{SW}
 is a standard reference.  It focuses  on numerical solutions to polynomial equations.
 The field is now often seen as a branch of applied mathematics. But, as we demonstrate in this article,
its methodology can be used in pure mathematics~too.

An instance of our problem is given by a list of $30 = 5 \times 6 $ coefficients in $\RR$ or $\CC$:
\begin{small}
\begin{equation}
\label{eq:instance}
  \begin{array}{l}
   A(x,y) \,=\, a_1 x^2 + a_2 xy + a_3 y^2 + a_4 x + a_5 y + a_6 , \\[0.3em]
   B(x,y) \,=\, b_1 x^2 + b_2 xy + b_3 y^2 + b_4 x + b_5 y + b_6 , \\[0.3em]
   C(x,y) \,=\, c_1 x^2 + c_2 xy + c_3 y^2 + c_4 x + c_5 y + c_6 , \\[0.3em]
   D(x,y) \,=\, d_1 x^2 + d_2 xy + d_3 y^2 + d_4 x + d_5 y + d_6 , \\[0.3em]
   E(x,y) \,=\, e_1 x^2 + e_2 xy + e_3 y^2 + e_4 x + e_5 y + e_6 .
  \end{array}
\end{equation}
\end{small}

\noindent By eliminating the two unknowns $x$ and $y$ from the three equations (\ref{eq:conicA}), (\ref{eq:conicU}) and (\ref{eq:jacobian}),
we can write the tangency condition directly in terms of the  $12=6+6$ coefficients
$a_1,\ldots,a_6,u_1, \ldots,u_6$ of $A$ and $U$:
\begin{equation}
\label{eq:tact}
 \begin{array}{rl}
  \mathcal{T}(A,U) \,=\,& \mkern-12mu
 256  a_1^4  a_3^2  u_3^2  u_6^4 - 128  a_1^4  a_3^2  u_3  u_5^2  u_6^3
 \\
  &\mkern-12mu + \, 16  a_1^4  a_3^2  u_5^4  u_6^2 +
\cdots + a_5^4 a_6^2 u_1^2 u_2^4.
\end{array}
\end{equation}
The polynomial $\mathcal{T}$ is a sum of $3210$ terms. It is of degree six in the variables $a_1,\ldots,a_6$ and of degree six in $u_1,\ldots,u_6$.
Known classically as the {\em tact invariant},
it vanishes precisely when the two conics are tangent.

If the coefficients are general, we can assume
that each conic $U$ that is tangent to $A,B,C,D,E$ has
nonzero constant term $u_6$. We can then
set $u_6~=~1$.
Steiner's problem for the conics $A,B,C,D,E$ now translates into a system
of five polynomial equations in five unknowns $u_1,u_2,u_3,u_4,u_5$.
Each of the five tangency constraints is an equation of degree six:
\begin{equation}
    \mathcal{T}(A,U) \,=\, \mathcal{T}(B,U) \,=\, \cdots
    \, = \, \mathcal{T}(E,U) \,=\, 0.
  \label{eq:formulation1}
\end{equation}
Steiner used B\'ezout's Theorem to argue that these equations
have $6^5 = 7776$ solutions. However, this number overcounts because
there is a {\em Veronese surface} of extraneous solutions $U$, namely
the squares of linear forms. These degenerate conics have the form
 $$ U(x,y) \,\,=\,\, (x,y,1) \cdot \ell^T\ell \cdot (x,y,1)^T, $$
  where $\ell=(\ell_1,\ell_2,\ell_3)$ is a row vector in $\CC^3$. Since $U(x,y) = (x,y,1) \left(\begin{smallmatrix} 2 u_1 & u_2  & u_4 \, \\
u_2 & 2 u_3 & u_5 \, \\
u_4 & u_5 & 2 u_6 \end{smallmatrix}\right)(x,y,1)^T$, the condition
for $U$ to be a square is equivalent to
\begin{equation}
\label{eq:3by3}
 {\rm rank}\begin{pmatrix} 2 u_1 & u_2  & u_4 \, \\
u_2 & 2 u_3 & u_5 \, \\
u_4 & u_5 & 2 u_6 \end{pmatrix}\,\,\, \leq \,\,\, 1 .
\end{equation}

This discussion leads us to the following algebraic reformulation of
Steiner's conic problem:
\begin{equation}\label{ourproblem}
  \begin{array}{l}
    \text{\em Find all solutions $U$ of the equations (\ref{eq:formulation1})} \\
    \text{\em such that the matrix in (\ref{eq:3by3}) has rank $\geq 2$.}
  \end{array}
\end{equation}

Ronga, Tognoli and Vust \cite{RTV}
proved the existence of five real conics whose
3264 conics all have real coefficients. In their argument they do not give an explicit instance, but rather show that in the neighborhood of some particular conic arrangement there must be an instance that has all of the $3264$ conics real. Hence, this raises the following problem:
\begin{equation}\label{ourproblem2}
  \begin{array}{l}
    \text{\em Find an explicit instance $A,B,C,D,E$ such} \\
    \text{\em that the $3264$ solutions $U$ to
    (\ref{ourproblem}) are all real.}
  \end{array}
\end{equation}

Using numerical algebraic geometry we discovered the solution in Figure \ref{fig:real_instance}. We claim that all the $3264$ conics that are tangent to those five conics are real.

\begin{figure*}
  \begin{center}
  \begin{equation*}
    \begin{array}{l}
    \begin{bmatrix}
a_1 & a_2 & a_3 & a_4 & a_5 & a_6 \\
b_1 & b_2 & b_3 & b_4 & b_5 & b_6 \\
c_1 & c_2 & c_3 & c_4 & c_5 & c_6 \\
d_1 & d_2 & d_3 & d_4 & d_5 & d_6 \\
e_1 & e_2 & e_3 & e_4 & e_5 & e_6
\end{bmatrix} \,\,=\,\,
\begin{bmatrix}
  \frac{10124547}{662488724} & \frac{8554609}{755781377} &	\frac{5860508}{2798943247} &	\frac{-251402893}{1016797750} &	\frac{-25443962}{277938473} & 	1\\[0.5em]
  \frac{520811}{1788018449}&	\frac{2183697}{542440933}&	\frac{9030222}{652429049}&	\frac{-12680955}{370629407}&	\frac{-24872323}{105706890} &	1\\[0.5em]
  \frac{6537193}{241535591}&	\frac{-7424602}{363844915}&	\frac{6264373}{1630169777}&	\frac{13097677}{39806827}&	\frac{-29825861}{240478169}& 	1\\[0.5em]
  \frac{13173269}{2284890206}&	\frac{4510030}{483147459}&	\frac{2224435}{588965799}&	\frac{33318719}{219393000}&	\frac{92891037}{755709662} & 1\\[0.5em]
  \frac{8275097}{452566634}&\frac{-19174153}{408565940}&	\frac{5184916}{172253855}&	\frac{-23713234}{87670601}&	\frac{28246737}{81404569} & 1
\end{bmatrix}.
\end{array}
\end{equation*}
\end{center}
\caption{The five conics from Proposition \ref{eins}.\label{fig:real_instance}}
\end{figure*}

\begin{proposition} \label{eins}
There are $3264$ real conics tangent to those
given by the  $5 \times 6$ matrix in Figure \ref{fig:real_instance}.
\end{proposition}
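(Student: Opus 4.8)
The plan is to convert Figure \ref{fig:real_instance} into an explicit polynomial system, solve it by numerical homotopy continuation, and then \emph{certify} the output so that the computation becomes a genuine proof. First I would substitute the thirty rational numbers of Figure \ref{fig:real_instance} into the tact invariant $\mathcal{T}$ of (\ref{eq:tact}), obtaining five polynomials of degree six in $u_1,\dots,u_5$ (with $u_6=1$) and rational coefficients; exact elimination of $u_1,\dots,u_5$ by Gr\"obner bases is hopeless at this scale, so a numerical approach is called for. Solving (\ref{eq:formulation1}) naively is awkward because its $6^5=7776$ B\'ezout solutions contain the entire Veronese surface (\ref{eq:3by3}) of double lines, so the honest conics are mixed with a positive-dimensional set of junk. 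The clean remedy is to work on the space of \emph{complete conics}: pass to pairs $(U,U^{\ast})$ where $U^{\ast}$ is the adjugate of the symmetric matrix in (\ref{eq:3by3}), phrase the five tangency conditions linearly in the entries of $U^{\ast}$, and adjoin the bilinear equations tying $U$ to $U^{\ast}$. This gives a square system whose isolated solutions are exactly the conics sought in (\ref{ourproblem}), and intersection theory on the smooth space of complete conics predicts $3264$ of them for five conics in general position.

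Next I would feed this system to a homotopy continuation solver, using a start system adapted to its structure (a total-degree or polyhedral start system, or a monodromy solve seeded from one generic instance), and track the paths to their endpoints. This yields $3264$ numerical solutions; one checks that they are pairwise distinct, that the Jacobian is well conditioned at each of them, that the matrix (\ref{eq:3by3}) has rank $\geq 2$ there (so none is a spurious double line), and --- the whole point --- that every coordinate has imaginary part of magnitude $10^{-k}$ for large $k$, i.e. the solutions all look real.

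The hard part, and the step that turns the numerical computation into a proof, is certification. Since all the data are rational, I would apply Smale's $\alpha$-theory (equivalently an interval-Newton or Krawczyk test) in exact arithmetic over $\QQ$: for each approximate solution $z$ this certifies that Newton's method from $z$ converges quadratically to a true solution $z^{\ast}$ that is unique in an explicit ball $B(z,r)$. Two observations then finish the argument. First, \emph{reality}: the coefficients are real, so $\overline{z^{\ast}}$ is again a solution, and $\|\overline{z^{\ast}}-z\|\le\|z^{\ast}-z\|+2\|\operatorname{Im}(z)\|$; if the certified residual together with $\|\operatorname{Im}(z)\|$ is smaller than the uniqueness radius $r$, then $\overline{z^{\ast}}$ and $z^{\ast}$ both lie in $B(z,r)$ and hence coincide, so $z^{\ast}\in\RR^{5}$. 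Second, \emph{completeness}: each $z^{\ast}$ is a simple zero, so the $3264$ certified solutions are genuinely distinct; since $3264$ is the total intersection number, no solutions remain --- none hidden at infinity, none on a positive-dimensional component. Combining the two, all $3264$ conics tangent to the five conics of Figure \ref{fig:real_instance} are real. The main obstacle is precisely to make these last two observations rigorous rather than merely numerical --- controlling round-off from the initial solve through to a final exact interval certificate --- which is exactly why choosing a rational instance, rather than invoking the existence result of Ronga--Tognoli--Vust, matters here.
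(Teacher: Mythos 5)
Your overall strategy coincides with the paper's: obtain the $3264$ candidates by numerical homotopy continuation, and then convert the computation into a proof by certifying the candidates \emph{a posteriori} in exact rational arithmetic via Smale's $\alpha$-theory (Theorem \ref{alpha_theorem}, as implemented in \texttt{alphaCertified} \cite{HS}), checking that the associated zeros are distinct and, by the conjugation argument, real. Your insistence that the floating-point solve is not part of the proof, and that only the exact certificate is, is precisely the paper's point of view.

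The step that would fail as written is your choice of polynomial system. Tangency of $U$ to a fixed \emph{conic} $A$ is not a linear condition on the adjugate $U^{\ast}$; linearity in the dual conic holds only for tangency to a fixed \emph{line} (the dual conic passes through the dual point). On the space of complete conics the tangency condition to a conic is of higher degree -- for instance it can be written as the discriminant of the cubic $\det(tA+U)$ in $t$, whose coefficients are linear in $U$ and in $U^{\ast}$ separately, but the discriminant itself is of degree four in these coefficients. So the advertised square system ``five linear conditions on $U^{\ast}$ plus bilinear compatibility equations'' with $3264$ isolated solutions does not exist as described, and you would need to re-derive a correct formulation before anything can be solved or certified. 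The paper avoids this issue by a different device: it stays with the primal coefficients $u_1,\dots,u_5$ but introduces the five points of tangency as unknowns, producing the system (\ref{alternative_formulation}) of $15$ low-degree equations in $15$ variables, and it tracks only $3264$ paths via a parameter homotopy from a precomputed generic start instance rather than a total-degree or polyhedral start. Once a correct system is substituted, your certification and reality arguments go through essentially verbatim; note also that, as in the paper, the claim that there are \emph{exactly} $3264$ real conics still rests on the enumerative upper bound coming from the intersection count on the space of complete conics, which you do invoke at the end.
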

We provide an animation showing all the 3264 real conics of this instance at this URL:
\begin{equation*}
 \text{\url{www.juliahomotopycontinuation.org/3264/}}
\end{equation*}
The construction of our example originates from an arrangement of double lines, which we call the \emph{pentagon construction}. One can see the pentagon in the
middle of Figure \ref{fig:pentagon}. There are points where the red conics seems to intersect a blue line but they are actually points where the red conic touches one branch of a blue hyperbola. See \cite{Sot3264} for further details.

Later we shall discuss the algebro-geometric meaning of the
pentagon construction,  and we present a rigorous computer-assisted proof
 that indeed all of the $3264$ conics tangent to our five conics are real.
 But, first, let us introduce our web browser interface.

\section*{Do It Yourself}

In this section we invite you, the reader, to chose your own instance of five conics.
We offer a convenient way for you to compute the $3264$ complex conics
that are tangent to your chosen conics.
Our web interface for solving
instances of Steiner's problem is found at
\begin{equation}
\label{eq:webbrowser}
 \text{\url{juliahomotopycontinuation.org/diy/}}
 \end{equation}
 Here you can type in your own $30 = 5 \times 6$ coefficients
for the conics in~(\ref{eq:instance}). After specifying five conics,
you press a button and this calls the numerical algebraic geometry software
{\tt HomotopyContinuation.jl}. This is the open source {\tt Julia} package
described in \cites{BT}.
Those playing with the web interface need not
worry about the inner workings.
But, if you are curious,
please read our section titled {\em How Does This~Work?}

 \begin{figure*}
   \begin{center}
 \includegraphics[width = \textwidth]{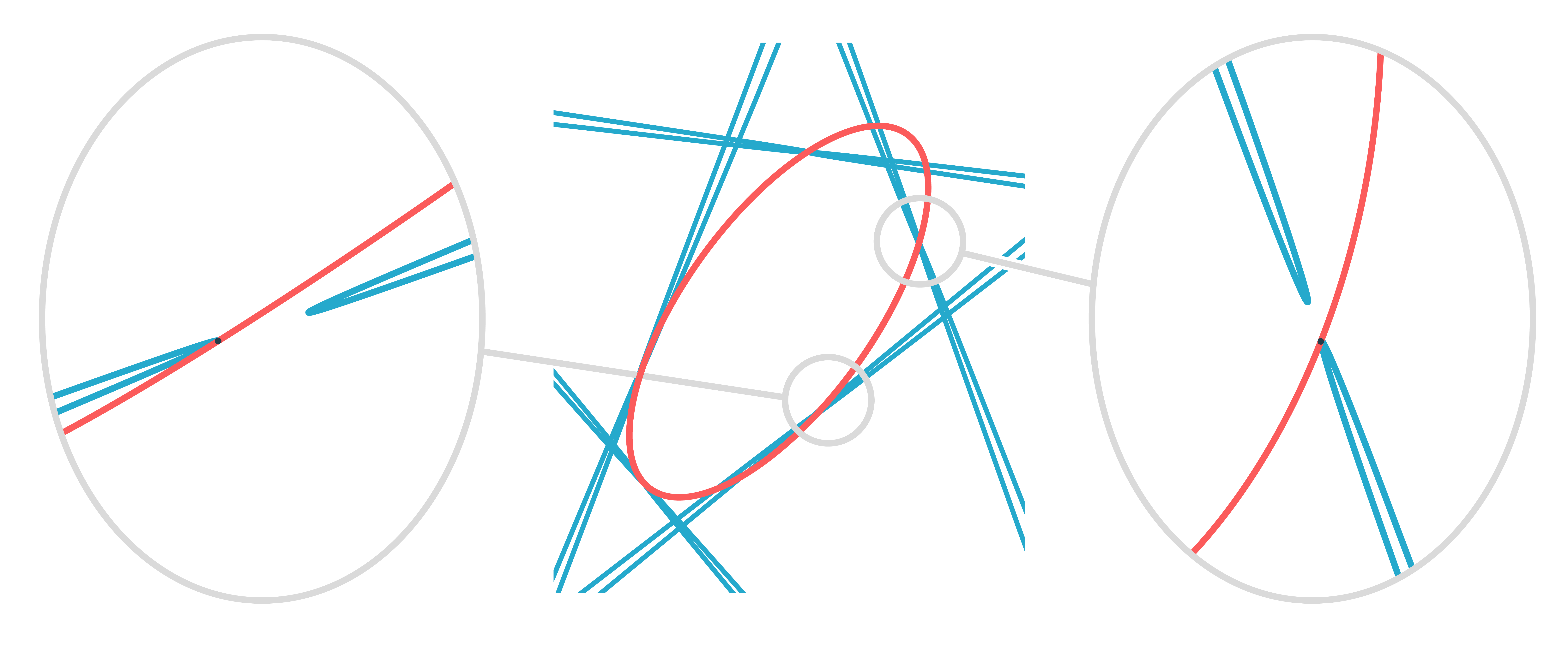}
 \vspace{-0.43in}
 \end{center}
   \caption{The five blue conics in the
   central picture are those in Proposition \ref{eins}.
   Shown in red is one of the $3264$ real conics that are tangent to the
    blue conics. Each blue conic looks like
 a pair of lines, but it is  a thin hyperbola whose branches are close
 to each other. The
 two pictures on the sides show close-ups around two  of the five points of tangency.
  The red conic is tangent to one of the  two branches of the blue hyperbola.
  \label{fig:pentagon}}
 \end{figure*}

\smallskip

Shortly after the user submits their instance,
by entering real coefficients, the web interface reports
whether the instance was generic enough to yield $3264$ distinct complex
solutions. These solutions are computed numerically.
The browser displays
 the number of real solutions, along
with a picture of the instance and a rotating sample of real solutions.
As promised in our title, the computation of all solutions takes only around one second.

\begin{figure}[t]
\begin{center}
\includegraphics[width = 0.42\textwidth]{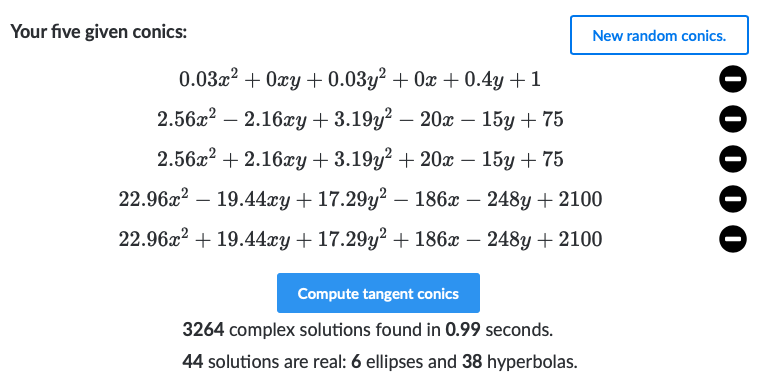}\\[1em]
\includegraphics[width = 0.42\textwidth]{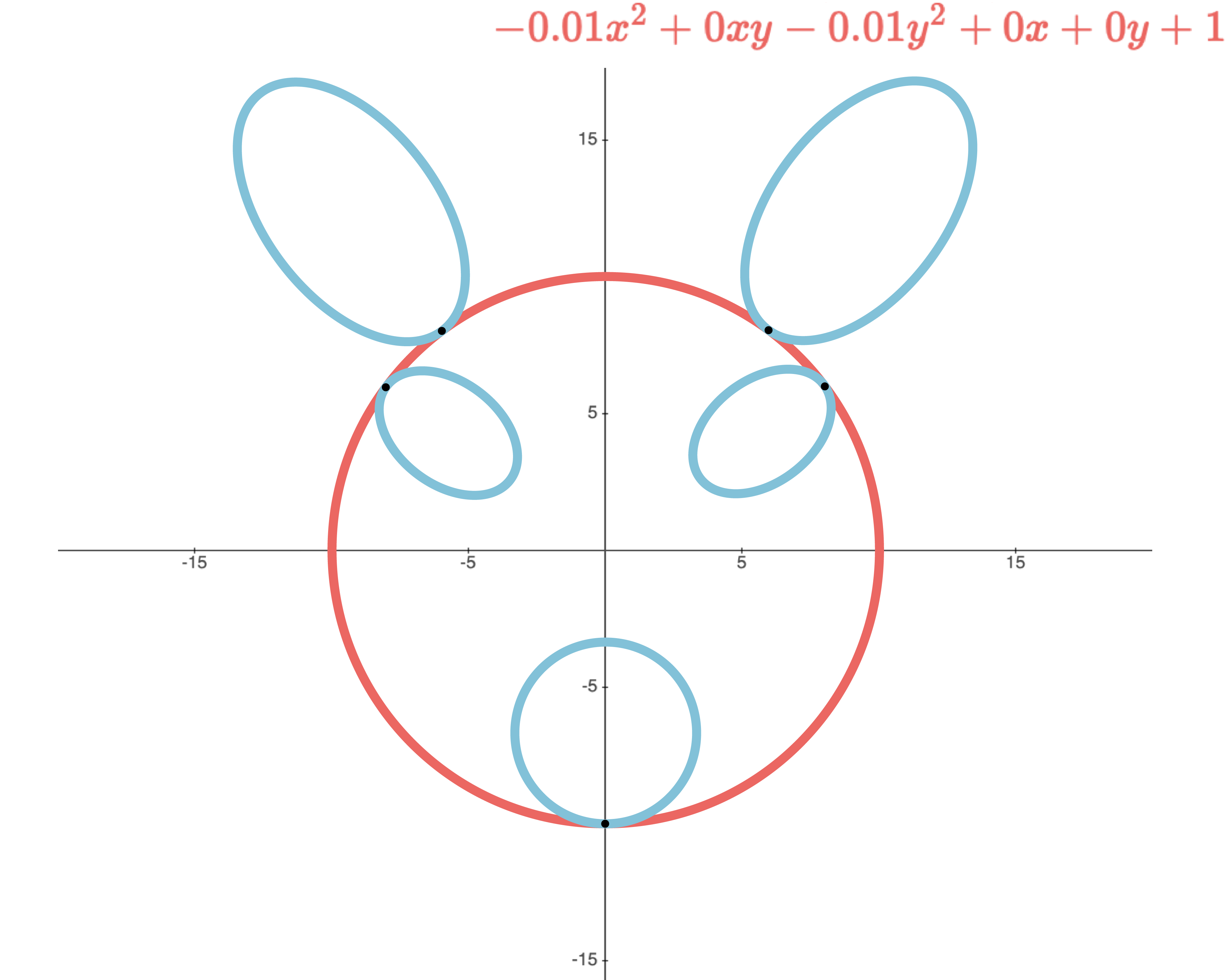}
\end{center}
\vspace{-0.15in}
        \caption{Input and output of the web interface
         (\ref{eq:webbrowser}).
         }
          \label{fig:bunny}
\end{figure}

\begin{remark} \rm
We always assume that the five
given conics are real and generic. This ensures that there are
$3264$ complex solutions, and these conics
are tangent to the given conics at $ 5 \times 3264$ distinct points.
The number of real solutions is even, and
our web interface displays them sequentially.
{\em For every real solution, the points of tangency on the given conics are also real}.
This fact uses the genericity assumption, since two particular real conics
can be tangent at two complex conjugate points. For instance,
the conics defined by $x^2-y^2+1$ and  $ x^2-4y^2+1$ are
tangent at the points $(i:0)$ and $(-i:0)$ where $i = \sqrt{-1}$.
\end{remark}

Figure \ref{fig:bunny} shows what the input and the visual output
of our web interface looks like. The user inputs five conics
and the system shows these in blue. After clicking the
``compute'' button, it responds
with the number of complex and real conics that were found. The $3264$
conics, along with all points of tangency, are available to the
user upon request. The real conics are shown in red,
as seen on the right in  Figure~\ref{fig:bunny}.

When seeing this output, the user might ask a number of questions.
For instance, among the real conics, how
many are ellipses and how many are hyperbolas?
Our web interface answers this question.
The distinction between ellipses and hyperbolas
is characterized by the eigenvalues of the real symmetric matrix
$$ \begin{pmatrix}
2 u_1 & u_2 \\
  u_2 & 2 u_3
  \end{pmatrix} .$$
If the two eigenvalues of this matrix have opposite signs, then the conic is a hyperbola.
If they have the same sign, then the conic is an ellipse. Among the ellipses,
we might ask for the solution which looks most like a circle. Our program does this
by minimizing the expression $\,(u_1-u_3)^2 + u^2_2$. Users with a numerical
analysis background might
 be interested in maximizing
the distance to the
degenerate conics. Equivalently, we ask: among all $3264$ solutions,
which $3 \times 3$ matrix in (\ref{eq:3by3}) has the smallest {\em condition number}?

You can adapt all this
for your favorite geometry problems. As pointed out above, the {\tt Julia} package
{\tt HomotopyContinuation.jl} is available to everyone -- follow the link at \cite{BT}.
This may enable you to solve your own polynomial systems in record time.

\section*{Chow Rings and Pentagons}
We next present the
approach to deriving the number $3264$ that would be taught in an algebraic
geometry class, along the lines of the article \cite{BKT}.
Thereafter we explain the geometric degeneration we used to construct the fully real instance in
Proposition \ref{eins}.

Steiner phrased his problem as that of solving five equations
of degree six on the five-dimensional space $\PP^5_\CC$. The
incorrect count occurred because of the locus of double
conics in~$\PP^5_\CC$. This is a  surface of
extraneous solutions. One fixes the problem by
replacing $\PP^5_\CC$ with another five-dimensional
manifold, namely the {\em space of complete conics}.
This space is the blow-up of $\PP^5_\CC$ at the locus of double lines.
It is a compactification of the space of nonsingular conics that has
desirable geometric properties.
A detailed description of this construction,
suitable for a first course in algebraic geometry,
can be found in \cite[\S 5.1]{BKT}.

In order to answer enumerative geometry questions about
the space of complete conics, one considers its
Chow ring, as explained in \cite[\S 5.2]{BKT}.
Elements in the Chow ring of the space of complete conics correspond to subvarieties of this space. More precisely, to \emph{classes} of subvarieties. Two subvarieties belong to the same class if and only if they are \emph{rationally equivalent}. Rational equivalence is a technical concept. We refer interested readers
to the textbook by Eisenbud and Harris \cite{EH}.
The Chow ring for the space of complete conics
is worked out in \cite[\S 8.2.4]{EH}. Nevertheless, the idea behind studying Chow rings is crystal clear: taking intersections of varieties is translated to multiplication in the Chow ring.
In the remainder of this section we will see this in action.

The Chow ring of the space of complete conics contains two special
classes $P$ and $L$. The class $P$ encodes the
conics passing through a fixed point, while the class $L$ encodes the
conics tangent to a fixed line.
The following relations hold in the Chow ring:
$$ P^5 = L^5 = 1  ,\, P^4 L = P L^4 = 2 ,\,P^3 L^2 = P^2 L^3 = 4. $$
These relations are derived in \cite[\S 4.4--5.3]{BKT}. For instance, the first equation means that, if we take five general conics passing through a fixed point, then the intersection contains one point (namely the point we fixed in the first place). See
\cite[Table 3]{BKT} for the geometric meaning of the other equations.

We write $C$ for the class of conics that are tangent to a given conic.
In the Chow ring, we have
$$ C \,\, = \,\, 2P + 2L . $$
This identity is derived in \cite[equation (8)]{BKT}.
Our preferred proof  is to inspect the first three terms
in the expression (\ref{eq:tact}) for the tact invariant $\mathcal{T}(A,U)$:
$$ \mathcal{T} =  16 \cdot u_6^2 (4u_3u_6-u_5^2)^2 \cdot a_1^4 a_2^3
\quad {\rm mod}  \langle a_2,a_3^3, a_4,a_5,a_6 \rangle . $$
This has the following intuitive interpretation. We assume that the given fixed conic $A$ satisfies
\begin{equation}
\label{eq:gg} |a_1| \gg |a_3| \gg \max \{ |a_2|, |a_4|, |a_5|, |a_6| \}.
\end{equation}
Thus the conic $A$ is close to $x^2 - \epsilon y^2$, where
$\epsilon$ is a small quantity. The process of letting $\epsilon$ tend to zero
is understood as a degeneration in the sense of algebraic geometry.
With this, the condition for $U$ to be tangent to $A$
degenerates to $\,u_6^2 \cdot (4u_3u_6-u_5^2)^2 = 0$.

The first factor $u_6$ represents all
conics that pass through the point $(0,0)$. The second factor $4u_3u_6-u_5^2$
represents all conics tangent to the line $\{x=0\}$. The Chow ring classes
of these factors are $P$ and $L$. Each of these arises with multiplicity~$2$,
as seen from the exponents.
The desired intersection number is now obtained from the Binomial Theorem:
\begin{small}
  \begin{equation*}
    \begin{array}{rl}
      C^5 \mkern-18mu
    &= 32 (L+P)^5 \\
      &= 32 ( L^5 + 5 L^4 P + 10 L^3 P^2 + 10 L^2 P^3 + 5 LP^4 + P^5) \\
      &= 32 (1 + 5 \cdot 2 + 10 \cdot 4 + 10 \cdot 4 + 5 \cdot 2 + 1) \\
      &= 32 \cdot 102  = 3264 \,.
    \end{array}
  \end{equation*}
\end{small}
The final step in turning this into a rigorous proof of Chasles' result is carried out
in \cite[\S 7]{BKT}.

The degeneration idea in (\ref{eq:gg}) can be used to construct
real instances of Steiner's problem
whose $3264$ solutions are all real.
Fulton first observed this and communicated it to Sottile, who then wrote down Fulton's proof in detail \cite{Sot, Sot3264}. Ronga, Tognoli and Vust \cite{RTV} independently gave a  proof.
Apparently, they did not know
about Fulton's ideas.

Fix a convex pentagon in $\RR^2$ and one special point
somewhere in the relative interior of each edge. Consider all conics~$C$
such that, for each edge of the pentagon, $C$ either passes through the
special point or is tangent to the line spanned by the edge. By the count above,
there are $(L+P)^5 = 102$ such conics~$C$.
 If the pentagon is chosen sufficiently asymetric, then the $102$ conics are all real.
We now replace each pointed edge by a nearby hyperbola, satisfying~(\ref{eq:gg}). For instance, if the edge has equation $x=0$ and $(0,0)$ is its special point
then we take the hyperbola  $x^2 - \epsilon y^2 + \delta$, 
where $\epsilon > \delta > 0$ are very small. After making appropriate choices
of these parameters along all edges of the pentagon, each of the
$102$ conics splits into $32$ conics, each tangent to the five hyperbolas. Here 'splits' means, if the process is reversed, then the $ 32$ different conics  collapse into one solution of multiplicity 32. By construction,
all $3264$ conics are real.

The argument shows that there \emph{exists} an instance in the neighborhood of the pentagon
whose $3264$ conics are all real, but it does not say \emph{how close}
they should be.
Serious hands-on experimentation was necessary for finding the instance
 in Proposition~\ref{eins}.

\smallskip

We next present an {\bf alternative formulation} of Steiner's conic problem.
The idea is to remember the five points of tangency on each solution conic.
The five sextics in~(\ref{eq:formulation1})
did not involve these points. They were obtained directly from the
tact invariant. The next system of equations avoids the use of the tact invariant.
It uses five copies of the equations (\ref{eq:conicA})--(\ref{eq:jacobian}), each with a different point of tangency
 $(x_i,y_i)$, for $i=1,2,3,4,5$.
The ten equations from (\ref{eq:conicA}) and (\ref{eq:conicU}) are quadrics.
The five equations from~(\ref{eq:jacobian}) are cubics. Altogether, we get the following system of $15$
equations that we display as a $5\times 3$ matrix $F_{(A,B,C,D,E)}$:
\begin{small}
\begin{equation}\label{alternative_formulation}
  \setlength\arraycolsep{2pt}
 \begin{bmatrix}
\, A(x_1,y_1)&
U(x_1,y_1) &
(\frac{\partial A}{ \partial x}   \frac{\partial U}{\partial y} \,-\,
\frac{\partial A}{ \partial y}   \frac{\partial U}{\partial  x})(x_1,y_1) \, \\[0.5em]
\, B(x_2,y_2)&
U(x_2,y_2) &
(\frac{\partial B}{ \partial x}   \frac{\partial U}{\partial y} \,-\,
\frac{\partial B}{ \partial y}   \frac{\partial U}{\partial  x})(x_2,y_2) \, \\[0.5em]
\, C(x_3,y_3)&
U(x_3,y_3) &
(\frac{\partial C}{ \partial x}   \frac{\partial U}{\partial y} \,-\,
\frac{\partial C}{ \partial y}   \frac{\partial U}{\partial  x})(x_3,y_3) \, \\[0.5em]
\, D(x_4,y_4)&
U(x_4,y_4) &
(\frac{\partial D}{ \partial x}   \frac{\partial U}{\partial y} \,-\,
\frac{\partial D}{ \partial y}   \frac{\partial U}{\partial  x})(x_4,y_4) \, \\[0.5em]
\, E(x_5,y_5)&
U(x_5,y_5) &
(\frac{\partial E}{ \partial x}   \frac{\partial U}{\partial y} \,-\,
\frac{\partial E}{ \partial y}   \frac{\partial U}{\partial  x})(x_5,y_5) \,
\end{bmatrix}.\end{equation}
\end{small}

Each matrix entry is a polynomial in the $15$ variables
$u_1,\ldots , u_5, x_1,y_1\ldots,x_5,y_5$.
The parameters of this system are the coefficients of the conics $A,B,C,D,E$. The system
of five equations  seen in
(\ref{eq:formulation1}) is obtained by eliminating  the $10$ variables
$x_1,y_1,x_2,y_2, x_3,y_3,
 x_4,y_4,  x_5,y_5 $ from the new system
  $ F_{(A,B,C,D,E)}(x) $ introduced in (\ref{alternative_formulation}).

\smallskip

At first glance, it looks like the new formulation  (\ref{alternative_formulation})
is worse than the one in (\ref{eq:formulation1}). Indeed,
the number of variables has increased from $6$ to $15$,
and the B\'ezout number has increased from $\,6^5 = 7776\,$
to $\, 2^{10} 3^5 =   248832 $. However, the new formulation is better suited for
 the numerical solver that powers our website. We explain this in the last section.

\section*{Approximation and Certification}
Steiner's conic problem amounts to solving a system of polynomial equations.
Two formulations were given in (\ref{eq:formulation1}) and (\ref{alternative_formulation}). But what does 'solving' actually mean? One answer is suggested in the
 textbook  by Cox, Little and O'Shea \cite{CLO}:
 Solving means computing a {\em Gr\"obner basis} $\mathcal{G}$. Indeed,
crucial invariants, like the dimension and degree of the solution variety,
are encoded in $\mathcal{G}$. The number of real solutions
is found by applying techniques like deriving {\em Sturm sequences} from the polynomials in
$\mathcal{G}$.  Yet, Gr\"obner bases can take a very long time to compute. We found them impractical for Steiner's problem.

Computing $3264$ conics in a second requires numerical methods.
Our encoding of the solutions
are not Gr\"obner bases but \emph{numerical approximations}.
How does one make this rigorous? This question can be phrased as follows.
Suppose $u_1,\ldots,u_6$ are the true coordinates of a solution
and $u_1+\Delta u_1,\ldots,u_6+\Delta u_6$ are approximations of those
complex numbers.
How small must the entries of  $\Delta u_1, \ldots, \Delta u_6$ be before it
is justified to call them approximations? This question is elegantly circumvented by using Smale's
definition of \emph{approximate zero} \cite[Definition 1 in \S 8]{BCSS}.

In short, an approximate zero of a system $F(x)$ of~$n$ polynomials in $n$ variables is any point $z\in \mathbb{C}^n$ such that Newton's method when applied to $z$ converges quadratically fast towards a zero of $F$. Here is the precise definition.

\begin{definition}[Approximate zero]\label{def_approx_zero} \rm
Let
$J_F(x)$ be the $n \times n$ Jacobian matrix of $F(x)$.  A point $z \in \mathbb{C}^n$ is an
{\em approximate zero} of $F\,$ if there exists
a zero $\zeta\in \mathbb{C}^n$ of~$F$ such that the sequence of  Newton iterates
$$z_{k+1} = N_F(z_k), \text{ where }
 N_F(x) =  x - J_F(x)^{-1}F(x)
$$
 starting at $z_0=z$
satisfies or all $k=1,2,3,\ldots$ that
$$ \Vert z_{k+1} - \zeta\Vert \,\leq \, \frac{1}{2}\Vert z_k-\zeta\Vert^2.$$
If this holds, then we call $\zeta$ the {\em associated zero} of~$z$.
Here~$\Vert x\Vert := (\sum_{i=1}^n x_i\overline{x_i})^\frac{1}{2}$
is the standard norm in~$\mathbb{C}^n$, and the zero $\zeta$ is assumed to be
nonsingular, i.e.~${\rm det}(J_F(\zeta)) \not=0 $.
\end{definition}

The reader should think of approximate zeros as a \emph{data structure}
for representing solutions to polynomial systems, just like a Gr\"obner basis is
 a data structure. Different types of representations of data provide different levels of accessibility to  the desired information. For instance, approximate zeros are not well suited for computing
 algebraic features of an~ideal. But they are a powerful tool for
 answering geometric questions in a fast and reliable manner.

Suppose that $z$ is a point in $\CC^n$ whose real and imaginary part are rational numbers.
How can we tell whether $z$ is an approximate zero of $F$? This is not clear from the definition.

It is possible to certify that $z$ is an approximate zero
without dealing with the infinitely many Newton iterates.
We next explain how this works. This involves Smale's $\gamma$-number and Smale's $\alpha$-number:
\begin{align*}
\gamma(F,z) &\,\,=\,\, \sup_{k\geq 2}\big\Vert \frac{1}{k!}\,J_F(z)^{-1} D^kF(z)\big\Vert^\frac{1}{k-1},\\
\alpha(F,z)  &\,\,=\,\,  \Vert J_F(z)^{-1}F(z)\Vert  \cdot \gamma(F,z).
\end{align*}
Here $D^kF(z)$ denotes the tensor of order-$k$ derivatives at the point $z$,
the tensor $J_F(z)^{-1} D^kF(z)$ is understood as a multilinear map
$A:(\mathbb{C}^n)^k\to \mathbb{C}^n$, and the norm
of this map is $\Vert A\Vert := \max_{\Vert v \Vert = 1} \Vert A(v,\ldots,v)\Vert$.

Shub and Smale \cite{SS} derived an upper bound for $\gamma(F,z)$ which can be computed exactly. Based on the next theorem \cite[Theorem 4 in Chapter 8]{BCSS}, one can thus decide algorithmically
if $z$ is an approximate zero, \emph{using only data of the point~$z$ itself}.

\begin{theorem}[Smale's $\alpha$-theorem]\label{alpha_theorem}
If $\alpha(F,z)<0.03$, then $z$ is an approximate zero of $F(x)$. Furthermore, if $y\in\mathbb{C}^n$ is any point with $\Vert y-z\Vert < (20\,\gamma(F,z))^{-1}$, then $y$ is also an approximate zero of $F$ with the same
associated zero $\zeta$ as $z$.
\end{theorem}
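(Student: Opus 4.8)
The plan is to follow the classical route to Smale's point estimates, as in \cite[Chapter 8]{BCSS}, organized around the auxiliary polynomial $\psi(u) = 2u^2 - 4u + 1$, whose smallest positive root is $u_0 = 1 - \tfrac{1}{\sqrt 2} \approx 0.293$. The single mechanism underlying everything is that $\gamma(F,z)$ dominates \emph{all} higher-order behavior of $F$ near $z$: writing out the Taylor series of $J_F$ and bounding its tail by a geometric series in $u := \gamma(F,z)\Vert w - z\Vert$, one obtains for $u < u_0$ the estimates
\[ \bigl\Vert J_F(z)^{-1} J_F(w) - I \bigr\Vert \,\le\, \tfrac{1}{(1-u)^2} - 1 , \qquad \bigl\Vert J_F(w)^{-1} J_F(z)\bigr\Vert \,\le\, \tfrac{(1-u)^2}{\psi(u)} , \]
so in particular $J_F$ is invertible throughout the ball of radius $u_0/\gamma(F,z)$ about $z$. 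A second estimate of the same flavor controls how $\gamma$ itself moves: $\gamma(F,w) \le \gamma(F,z)\big/\bigl((1-u)\psi(u)\bigr)$ on that ball.

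First I would combine these into the two fundamental \emph{Newton-step inequalities}. Put $\beta(F,z) := \Vert J_F(z)^{-1} F(z)\Vert = \Vert N_F(z) - z\Vert$, so that $\alpha(F,z) = \gamma(F,z)\beta(F,z)$, and set $z' := N_F(z)$. Taylor-expanding $F$ around $z$ and evaluating at $z'$ --- where the defining property of Newton's method annihilates the linear term --- gives a bound $\beta(F,z') \le h\bigl(\alpha(F,z)\bigr)\,\beta(F,z)$ with $h(\alpha) = O(\alpha)$; multiplying by the $\gamma$-displacement bound yields $\alpha(F,z') \le g\bigl(\alpha(F,z)\bigr)\cdot \alpha(F,z)$ for an explicit rational $g$ with $g(0) = 0$. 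Calibrating the threshold so that $\alpha(F,z) < 0.03$ forces both $\gamma(F,z)\Vert z'-z\Vert < u_0$ and $g(\alpha(F,z)) < \tfrac12$, I get by induction along $z_0 = z$, $z_{k+1} = N_F(z_k)$ that $\alpha(F,z_k) \to 0$ doubly exponentially and that the steps $\Vert z_{k+1} - z_k\Vert = \beta(F,z_k)$ decay at least geometrically. Hence $(z_k)$ is Cauchy; call its limit $\zeta$. Continuity gives $F(\zeta) = 0$, and since $\gamma(F,\zeta)$ is finite (inherited from the nearby $z_k$ via the displacement bound), $\det J_F(\zeta) \neq 0$, so $\zeta$ is a nonsingular zero of $F$.

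Next I would upgrade this to the precise quadratic bound $\Vert z_{k+1} - \zeta\Vert \le \tfrac12 \Vert z_k - \zeta\Vert^2$ of Definition \ref{def_approx_zero}. The clean way is to redo the Taylor/series estimate, now centered at $\zeta$: for $w$ with $u := \gamma(F,\zeta)\Vert w - \zeta\Vert < u_0$ one gets $\Vert N_F(w) - \zeta\Vert \le \tfrac{u}{(1-u)\psi(u)}\,\Vert w - \zeta\Vert$, whose coefficient is $O(u)$ and hence already \emph{a quadratic contraction} in $\Vert w - \zeta\Vert$; the numerical constant $\tfrac12$ then falls out once the relevant ball has been shrunk enough, which is exactly what the bound $0.03$ is set up to ensure after using $\beta(F,z)$ to control $\Vert z - \zeta\Vert \le \beta(F,z)/(1 - O(\alpha))$ and thus $u$ at the start of the sequence. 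Since the contraction ball is forward-invariant, the quadratic inequality then holds for \emph{all} $k \ge 1$, so $z$ is an approximate zero with associated zero $\zeta$. For the robustness clause, assume $\Vert y - z\Vert < (20\gamma(F,z))^{-1}$. Then $\gamma(F,z)\Vert y - z\Vert < \tfrac{1}{20} < u_0$, so the displacement bounds give $\gamma(F,y) \le c_1\,\gamma(F,z)$ and $\beta(F,y) \le c_2\,\Vert y - z\Vert$ with explicit constants close to $1$, whence $\alpha(F,y) < 0.03$ and the first part applies to $y$. Since $\Vert y - \zeta\Vert \le \Vert y - z\Vert + \Vert z - \zeta\Vert$ still lies inside the quadratic-attraction ball of $\zeta$ found above, and distinct nonsingular zeros of $F$ are separated by at least a universal constant over $\gamma$, the Newton sequence started at $y$ must converge to the \emph{same} $\zeta$.

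The step I expect to be the main obstacle is pinning down the Newton-step inequality $\beta(F,N_F(z)) \le h(\alpha(F,z))\,\beta(F,z)$ with a constant good enough to use: this needs the full-strength Taylor-remainder bound for $F$ itself (not merely for $J_F$) expressed through $\gamma(F,z)$, and then the bookkeeping of composing it with the $\gamma$-displacement bound so that the resulting $g(\alpha)$ genuinely drops below $\tfrac12$ at $\alpha = 0.03$ while still leaving room to launch the induction from the hypothesis on $z$ rather than on some later iterate. Everything else reduces to elementary estimates with the polynomial $\psi$ and to the separation-of-zeros lemma.
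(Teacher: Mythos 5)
The paper does not prove this theorem at all: it quotes it from \cite[Theorem 4 in Chapter 8]{BCSS}, and the argument you outline (the estimates via $\psi(u)=2u^2-4u+1$, the $\gamma$-displacement bounds, the Newton-step inequality, and the quadratic contraction recentered at $\zeta$) is exactly the standard point-estimate proof given in that reference, so your route is essentially the same as the paper's source. The one piece you explicitly leave open --- checking that the specific pair $\alpha_0=0.03$, $t_0=20$ satisfies the needed inequalities --- is precisely the calibration the paper itself notes can be replaced by any admissible pair of constants, and it is carried out in \cite{BCSS} and in \cite{HS}.
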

Actually, Smale's $\alpha$-theorem is more general in the sense that $\alpha_0=0.03$ and $t_0=20$ can be replaced by any two positive numbers $\alpha_0$ and $ t_0$ that satisfy a certain list of inequalities.

Hauenstein and Sottile \cite{HS} use  Theorem \ref{alpha_theorem}
 in~an algorithm, called {\tt alphaCertified}, that decides if a point $z\in \mathbb{C}^n$ is an approximate zero and if two approximate zeros have distinct associated solutions. An implementation  is publicly available. Furthermore, if the polymomial system $F$ has only real coefficients, then {\tt alphaCertified} can decide if an associated zero is real. The idea behind this is as follows: Let $z\in \mathbb{C}^n$ be an approximate zero of~$F$ with associated zero~$\zeta$.
If the coefficients of $F$ are all real, then the Newton operator $N_F(x)$ from Definition \ref{def_approx_zero} satisfies $N_F(\overline{x}) =\overline{ N_F(x)}$.
 Hence $\overline{z}$ is  an approximate zero of $F$ with associated zero $\overline{\zeta}$. If~$\Vert z-\overline{z}\Vert < (20\,\gamma(F,z))^{-1}$, then, by Theorem \ref{alpha_theorem}, the associated zeros of $z$ and $\overline{z}$ are equal.
 This means $\zeta = \overline{\zeta}$.

A fundamental insight is that Theorem \ref{alpha_theorem} allows us to certify
candidates for approximate zeros regardless of how they were obtained. Typically,
  candidates are found by inexact computations using floating point arithmetic.
    We do not need to know what happens in that computation,
  because we can certify the result  \emph{a posteriori}.
  Certification constitutes a rigorous proof of a mathematical result.
  Let us see this in action.

  \begin{figure}[t]
  \begin{center}
  \includegraphics[height = 3.7cm]{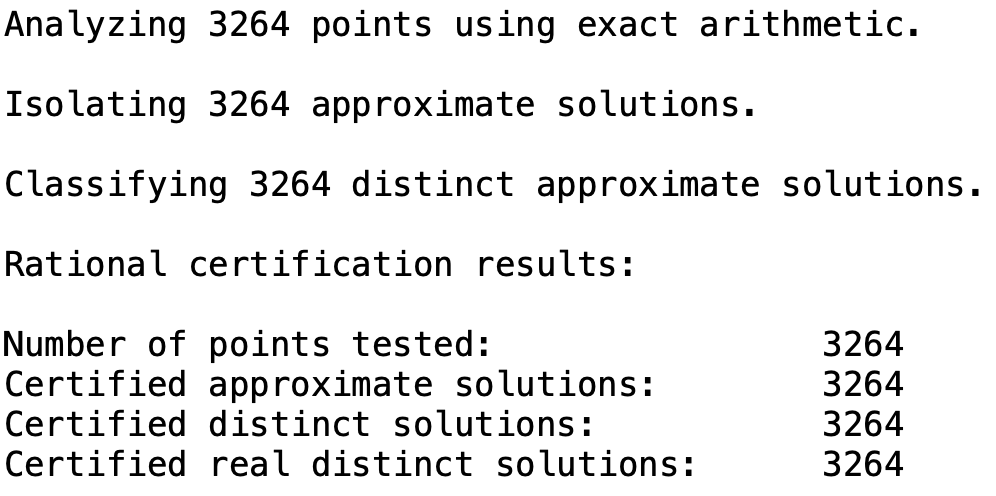}\vspace{-0.2in}
      \end{center}
  \caption{A proof for Proposition \ref{eins} given by the software \texttt{alphaCertified}.\label{output_alpha_certified}}
  \end{figure}
  \begin{proof}[Proof of Proposition \ref{eins}]
Fix five non-degenerate conics with rational coefficients listed after equation (\ref{ourproblem2}). We apply {\tt HomotopyContinuation.jl} \cite{BT}
    to compute $3264$ solutions in a second in 64-bit floating point arithmetic.
    The output is inexact. Each coefficient $u_i$ of each true solution $U$
    is a complex number
    that is algebraic of degree $3264$ over $\QQ$.
    The floating point numbers that represent these coefficients are
    rational numbers and we treat them as elements of $\QQ$.

Our proof starts with the resulting list of $3264 $ vectors $x \in \QQ^{15}$
corresponding to the $15 $ variables of \eqref{alternative_formulation}.
The computation was mentioned to make the exposition more friendly.
It is  {\em not} part of the proof.

We are now given $3264$ candidates for approximate zeros
 of the polynomial system in (\ref{alternative_formulation}).
These candidates have rational coordinates. We use them
as input to the software {\tt alphaCertified} from  \cite{HS}.
That software performs \emph{exact computations in rational arithmetic}.
Its output shows that the $3264$ vectors $x$ are approximate zeros, that their associated zeros $\zeta$ are distinct, and that they all have real coordinates.
This is shown in Figure \ref{output_alpha_certified}. The output data of {\tt alphaCertified} is available for download through the arXiv version of this article.
  \end{proof}

This was a rigorous proof of Proposition \ref{eins}, just as trustworthy as a computer-assisted proof by symbolic computation
(e.g.~Gr\"obner bases and Sturm sequences) might have been. Readers
who are experts in algebra should
not get distracted by the appearance of floating point arithmetic: it is \emph{not} part of the proof! Floating point numbers are only a tool for obtaining the $3264$ candidates. The actual proof is carried out by exact symbolic computations.

\section*{How Does This Work?}
In this section we discuss the methodology and software that powers  the web interface (\ref{eq:webbrowser}).

We use the software {\tt HomotopyContinuation.jl} that was developed by two of us \cite{BT}.
This is a {\tt Julia} \cite{BEKV} implementation of a computational paradigm called \emph{homotopy continuation}. The reasons we chose {\tt Julia} as the programming language are threefold: the first is that {\tt Julia} is open source and free for anyone to use. The second is that {\tt Julia} can be as fast as well written {\tt C}. For instance, we use {\tt Julia}'s JIT compiler for fast evaluation of polynomials. Finally, the third reason is that, despite its high performance, {\tt Julia} still provides an easy high-level syntax. This makes our software accessible for users from many backgrounds.

Homotopy continuation works as follows: We wish to find a zero in $\CC^n$ of
a system $F(x)$ of $n$ polynomials in $n$ variables. Let
 $G(x)$ be another such system with a known zero $G(\zeta) = 0$. We connect $F$ and $G$ in the space of polynomial systems
 by a path $t\mapsto H(x,t)$ with $H(x,0) = G(x)$ and $H(x,1)=F(x)$.

 The aim is to approximately follow the {\em solution path} $x(t)$ defined by $H(x(t),t)=0$. For this, the path is discretized into
 steps $\,t_0 = 0 < t_1 < \cdots < t_k = 1$. If the discretization is fine enough
 then $\zeta$ is also an approximate zero of $H(x,{t_1})$.
Hence, by Definition~\ref{def_approx_zero}, applying the Newton operator
 $N_{H(x,t_1)}(x)$ to $\zeta$, we get a sequence $\zeta_0,\zeta_1,\zeta_2,... $ of points that converges towards a zero $\xi$ of $H(x,t_1)$. If $t_2-t_1$ and  $\Vert \zeta_i-\xi\Vert$ are small enough, for some $i\geq 0$, then the iterate $\zeta_i$ is an approximate zero of $H(x,t_2)$.

  We may repeat the procedure for $H(x,t_2)$ and starting with $\zeta_i$. Inductively, we find an approximate zero of $H(x,t_j)$ for all~$j$. In the end, we obtain
  an approximate zero for the system $F(x)=$ $ H(x,1)$. Most implementations of homotopy continuation, including {\tt Bertini} \cite{BHSW06} and {\tt HomotopyContinuation.jl} \cite{BT}, use heuristics for setting
  both the step sizes $t_{j+1}-t_j$ and the number of Newton iterations.

Our homotopy for Steiner's conic problem computes zeros of the system~$F_{(A,B,C,D,E)}(x)$ from (\ref{alternative_formulation}).
We prefer formulation (\ref{alternative_formulation}) over (\ref{eq:formulation1}) because the
equations in the former formulation have
lower degrees and fewer terms.
It is known that high degrees and many terms introduce numerical instability in the evaluation of polynomials.
We use the homotopy
\begin{equation}\label{our_homotopy}
  \mkern-12mu H(x,t) = F_{t\cdot(A,B,C,D,E)+(1-t)\cdot(A',B',C',D',E')}(x).
\end{equation}
The conic $tA+(1-t)A'$ is defined by the coefficients $ta_i+(1-t)a_i'$, where $a_i$ and $a_i'$ are the coefficients of $A$ and $A'$.
This is called a \emph{parameter homotopy} in the literature.
Geometrically, (\ref{our_homotopy})
 is a straight line in the space of quintuples of conics.
An alternative would have been
  \begin{equation}\label{not_our_homotopy}
  \begin{small} \mkern-6mu
    {\tilde H}(x,t) =
    tF_{(A,B,C,D,E)} + (1-t)F_{(A',B',C',D',E')}.
\end{small}
  \end{equation}
The advantage of  (\ref{our_homotopy}) over
(\ref{not_our_homotopy}) is that the path
stays within the space of \emph{structured systems}
$$\{F_{(A,B,C,D,E)}(x) \mid (A,B,C,D,E) \text{ are conics}\}.$$
The structure of the equations is preserved. The system in (\ref{our_homotopy}) has $3264$ solutions for almost all $t$,
whereas (\ref{not_our_homotopy}) has $7776$ solutions for random $t$.
In the language of algebraic geometry,
we prefer the {\em flat family} (\ref{our_homotopy}) over
(\ref{not_our_homotopy}), which is not flat.

The last missing piece in our {\em Steiner homotopy} is a start system. That is, we need five
explicit conics together with all~3264 solutions. For this, we construct a generic instance $(A,B,C,D,E)$ by randomly sampling complex coefficients for the conics. Then, we compute the 3264 solutions using standard homotopy continuation techniques \cite{SW}. Those 3264 solutions are saved and used for further computations. This initial computation is significantly more expensive than tracking 3264 solutions along the homotopy (\ref{our_homotopy}) but it only has to be done once.

In closing, we emphasize the important role played by enumerative geometry for solving polynomial systems.
It gives a criterion for deciding if the initial numerical computation found the correct number of solutions. This is why
numbers like $3264$ are so important, and why a numerical analyst might~care about
Chow rings and the pentagon construction.
   We conclude that enumerative algebraic geometry and numerical algebraic geometry
  complement each other.

\bigskip \bigskip

\noindent
\footnotesize {\bf Authors' addresses:}

\smallskip

\noindent Paul Breiding,  MPI-MiS Leipzig
\hfill {\tt breiding@mis.mpg.de}

\noindent Bernd Sturmfels,
 \\  \phantom{dodo} MPI-MiS Leipzig and
UC  Berkeley \hfill  {\tt bernd@mis.mpg.de}

\noindent Sascha Timme, TU Berlin
\hfill {\tt timme@math.tu-berlin.de}

\begin{thebibliography}{10}

\begin{small}
\setlength{\itemsep}{-0.4mm}

\bibitem[BKT08]{BKT}
A.~Bashelor, A.~Ksir and W.~Traves:
{\em Enumerative algebraic geometry of conics},
American Mathematical Monthly {\bf 115} (2008) 701--728.
MR2456094

\bibitem[BHSW06]{BHSW06}
D.~Bates, J.~Hauenstein, A.~Sommese and C.~Wampler:
{\em Bertini: Software for Numerical Algebraic Geometry.}
Available at \url{bertini.nd.edu},

\bibitem[BEKV17]{BEKV}
J.~Bezanson, A.~Edelman, S.~Karpinski and V.~Shah:
\emph{Julia: A fresh approach to numerical computing},
SIAM Review {\bf 59} (2017) 65--98.
MR3605826

\bibitem[BCSS98]{BCSS}
L.~Blum, F.~Cucker, M.~Shub and S.Smale:
{\em Complexity and Real Computation},
Springer, 1998.
MR1479636


 \bibitem[BT18]{BT}  P.~Breiding and S.~Timme:
 {\em Homotopy{C}ontinuation.jl - a package for solving systems of
 polynomial equations in {J}ulia},
 Mathematical Software -- ICMS 2018. Lecture Notes in Computer Science. Software available at
{\tt www.juliahomotopycontinuation.org}.



\bibitem[CLO15]{CLO}
D.~Cox,  J.~Little and D.~O'Shea:
{\em Ideals, Varieties, and Algorithms}:
 An Introduction to Computational Algebraic Geometry and
 Commutative Algebra, 4th ed., Undergraduate Texts in Mathematics, Springer,~2015.
 MR3330490


\bibitem[EH16]{EH}
D.~Eisenbud and J.~Harris:
{\em 3264 And All That -- a Second Course in Algebraic Geometry},
 Cambridge University Press, 2016.
 MR3617981

 \bibitem[GZ79]{GZ}
 C.B.~Garcia and W.L.~Zangwill:
 {\em Finding all solutions to polynomial systems and other systems of equations},
 Mathematical Programming {\bf 16} (1979) 159-176. MR0527572

\bibitem[HS12]{HS}
J.~Hauenstein and F.~Sottile:
{\em alphaCertified: Certifying solutions to polynomial systems},
ACM Transactions on Mathematical Software {\bf 48} No.~4 (2012).
MR2972672

\bibitem[RTV97]{RTV}
F.~Ronga, A.~Tognoli and T.~Vust:
{\em The number of conics tangent to five given conics: the real case},
Rev.~Mat.~Univ.~Complut.~Madrid {\bf 10} (1997) 391--421.
MR1605670

\bibitem[SS93]{SS}
M.~Shub and S.~Smale:
{\em Complexity of B\'ezout's theorem I. Geometric aspects},
Journal of the American Mathematical Society {\bf 6} (1993) 459--501.
MR1175980

\bibitem[Sot95]{Sot}
F.~Sottile: {\em Enumerative geometry for real varieties},
Algebraic geometry -- Santa Cruz 1995, 435--447,
Proc.~Sympos.~Pure Math., 62, Part 1, Amer. Math. Soc., Providence, RI, 1997.
MR1492531

\bibitem[Sot]{Sot3264}
F.~Sottile: {\em 3264 real conics}, \url{www.math.tamu.edu/~sottile/research/stories/3264/}.

\bibitem[SW05]{SW}
A.~Sommese and C.~Wampler: {\em The Numerical Solution of Systems of Polynomials Arising in Engineering and Science},
World Scientific, 2005.
MR2160078

\end{small}

\end{thebibliography}
\end{document}